\newtheorem{theorem}{Theorem}
\newtheorem{lemma}[theorem]{Lemma}
\newtheorem{example}{Example}
\newtheorem{definition}{Definition}
\newcommand{\R}{\mathbb{R}}
\newcommand{\N}{\mathbb{N}}
\newcommand{\GG}{\mathbb{G}}
\newcommand{\EE}{\mathbb{E}}
\newcommand{\algo}{\mathfrak{a}}
\newcommand{\eps}{\varepsilon}
\newcommand{\footremember}[2]{%
   \footnote{#2}
    \newcounter{#1}
    \setcounter{#1}{\value{footnote}}%
}
\newcommand{\footrecall}[1]{%
    \footnotemark[\value{#1}]%
}
\begin{document}
\begin{center}\textsc{\large Approximate solutions to the Travelling Salesperson Problem on semimetric graphs}
\\
\vspace{1cm}
{Mateusz Krukowski,\footremember{IMPL}{\textsc{\L\'od\'z University of Technology, \ Institute of Mathematics, \\ W\'ol\-cza\'n\-ska 215, \
  	90-924 \ \L\'od\'z, \ Poland}} Filip Turobo\'s\footrecall{IMPL}}
\end{center}

\begin{abstract}
With the aid of the relaxed polygonal inequality (introduced by Fagin et al.) we strive to extend the applicability of Christofides approximation technique to the scope of all complete finite weighted graphs with positive weights. First section acquaints the Reader with the class of semimetric graphs and proves that every finite graph admits $\gamma$-polygon structure. Sections 2 and 3 establish the necessary notions from the graph and optimization theory to tackle the Traveling Salesperson Problem. In section 4 the minimal spanning tree method is introduced, while section 5 focuses on the analysis of this method through the lens of $\gamma$-polygon graphs. The final section of the paper adjusts the technique of Christofides by obtaining $\frac{3\gamma}{2}-$approximation for the TSP.
\end{abstract}
\smallskip
\noindent 
\textbf{Keywords : $b$-metric spaces, semimetric spaces, MST, TSP}
\textbf{AMS: 54E25, 
90C27, 
68R10, 
05C45, 
}

\section{Semimetric initial framework}

In the past several decades we have witnessed the formation of close ties between discrete mathematics and the theory of metric spaces.\footnote{For a detailed discussion on the interconnection between these mathematical disciplines, see \cite[Preface]{Blumenthal1953}.} The underlying reason for this ``mathematical marriage'' is that metric spaces constitute an abstract (but still useful) setting for various problems in discrete mathematics (this fact has already been emphasized by Piotr Indyk and Nathan Linial).\footnote{See \cite{Indyk1999} and \cite{Linial2002}, respectively.} As the years went by mathematicians started to fiddle with the generalizations of metric spaces and their possible applications to the realm of discrete mathematics. Among the most fruitful generalizations one should list hemimetric spaces (also known in the literature as quasimetric spaces), where the symmetry axiom ``$d(x,y) = d(y,x)$'' in the definition of metric is skipped.\footnote{See \cite{Brattka2003}, \cite{Gustavsson1975}  and  \cite[p. 700]{Smyth1993}.} Such spaces emerge naturally when we measure time needed to traverse from point $A$ to point $B$ in the mountains (you need not be a highlander to know that the climb is usually longer than the descent).
Another very popular generalization of the metric space is the pseudometric space, where the first axiom of a metric, i.e.
$$d(x,y) = 0 \ \iff  x = y,$$
    
\noindent
need not hold.\footnote{See \cite[p. 1]{Howes1995}.} It turns out that pseudometric spaces are especially common in the domain of fuzzy sets.\footnote{See \cite{Voxman1981,Lowen1984}.}

The applications of hemimetric and pseudometric spaces in various branches of mathematics are rather encouraging to say the least. Following that clue, our paper focuses on another family of spaces, called the semimetric spaces:\footnote{The literature on semimetric spaces is quite vast and we recommend the following list of references as a well grounded starting point:\cite{Bessenyei2017,Ceder1961, Jachymski2018,Chrzaszcz2018,Gottlieb2017,Turobos2020,Pareek1972,Wilson1931}.}
\begin{definition}
For a nonempty set $X$, a function $d:X\times X \longrightarrow [0,+\infty)$ is called a semimetric if it satisfies the following two conditions:
\begin{itemize}
	\item $\forall_{x,y\in X}\ d(x,y) = 0 \ \iff \ x=y$,
	\item $\forall_{x,y\in X}\ d(x,y) = d(y,x)$.
\end{itemize}

\noindent
The pair $(X,d)$ is called a semimetric space.
\label{semimetricdefinition}
\end{definition}		

Browsing through the literature one encounters multiple sources which assume that a semimetric space is first-countable.\footnote{See both articles \cite{Cook1977,Lutzer1971} or Definition 9.5 and Theorem 9.6 in the monograph \cite{Kunen1984}.} Throughout the paper we are only concerned with finite spaces (finite graphs to be precise), so the first-countability condition is satisfied, although not explicitely required in Definition \ref{semimetricdefinition}.

Amongst all semimetric spaces it is convenient to distinguish those, which have some counterpart of the celebrated triangle inequality. For our purposes, let us introduce $\beta-$metric spaces and $\gamma-$polygon spaces:\footnote{Both $\beta-$metric and $\gamma-$polygon spaces are established  in the literature: \cite{An2015,Andreae1995,Bandelt1991,Bakthin1989,Bourbaki1966,Jachymski2018,Chrzaszcz2018,Dung2016,Fagin2003,Turobos2020,Paluszynski2009,Schroeder2006,Suzuki,Wilson1931,Xia2009} serve just as a couple of examples. }

\begin{definition}
A semimetric space $(X,d)$ is said to be:
\begin{itemize}
    \item $\beta-$metric space if $\beta\geqslant 1$ is the smallest number such that the semimetric $d$ satisfies the $\beta-$triangle inequality:
    \begin{gather}
    \forall_{x,y,z\in X}\ d(x,z)\leqslant \beta ( d(x,y) + d(y,z) ),
    \label{betainequality}
    \end{gather}
    \item $\gamma-$polygon space if $\gamma\geqslant 1$ is the smallest number such that the semimetric $d$ satisfies the $\gamma-$polygon inequality:
    \begin{gather}
    \forall_{n\in\N} \ \forall_{x_1,\dots,x_n\in X}\ d(x_1,x_n)\leqslant \gamma\cdot \sum_{k=1}^{n-1} d(x_k,x_{k+1}).
    \label{polygonalinequality}
    \end{gather}
\end{itemize}
\end{definition}

It goes without saying that every $\gamma-$polygon space is automatically a $\beta-$metric space with $\beta \leqslant \gamma$ (we will shortly provide an example that the constants need no coincide). Conversely, if we assume that $X$ is finite (as we do in this paper), then every $\beta-$metric space is a $\gamma-$polygon space with $\gamma\leqslant \beta^{|X|-1}.$ A closer inspection\footnote{Carried out by Suzuki in \cite{Suzuki}.} reveals that we have an estimate $\gamma \leqslant \beta^{\lceil\log_2(|X|-1)\rceil}$ (assuming that $X$ consists of more than one point).


It is a relatively easy, though still useful remark that every finite semimetric space admits both $\beta$-metric and $\gamma$-polygon structure:\footnote{As far as we know this observation first appeared in \cite{Chrzaszcz2018}.}

\begin{theorem}\label{lamma}
Every finite semimetric space $(X,d)$ is a $\gamma-$polygon space with 
\begin{equation}
    \gamma = \sup\left\{ \frac{d(x_1,x_n)}{\sum_{k=1}^{n-1} d(x_k,x_{k+1})} \ : n\in \N, \ x_1,\dots ,x_n\in X,\ x_1\neq x_n \right\}. \label{formulaforgamma}
\end{equation}

\noindent
In particular, every finite semimetric space $(X,d)$ is a $\beta-$metric space with
\begin{equation}
\beta = \sup\left\{ \frac{d(x,z)}{d(x,y) + d(y,z)} \ : \ x,y,z\in X,\ x\neq z \right\}. \label{formulaforbeta}
\end{equation}
\end{theorem}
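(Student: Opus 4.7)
The plan is to show that the $\gamma$ defined by the supremum in \eqref{formulaforgamma} is both finite and equal to the smallest constant satisfying \eqref{polygonalinequality}. The key reduction is a \textbf{shortcutting argument}. Given a sequence $x_1, \dots, x_n$ with $x_1 \neq x_n$ in which some vertex repeats, say $x_i = x_j$ with $i < j$, I would pass to the shorter sequence $x_1, \dots, x_i, x_{j+1}, \dots, x_n$. The endpoints are preserved, so the numerator $d(x_1, x_n)$ is unchanged; the sum in the denominator can only decrease (we remove nonnegative terms), and it stays strictly positive because the semimetric axiom $d(y,z) = 0 \iff y = z$ would otherwise force the contracted sequence to collapse to a single point, contradicting $x_1 \neq x_n$. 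Consequently, the ratio only goes up, and the supremum can be restricted to sequences of pairwise distinct vertices. Since $|X|$ is finite, this is a supremum over a finite set and is therefore achieved and finite. The bound $\gamma \geqslant 1$ is witnessed by $n=2$.

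With finiteness in hand, both required properties follow immediately. The $\gamma$-polygon inequality \eqref{polygonalinequality} is a direct rearrangement of the defining inequality for $\gamma$ in the case $x_1 \neq x_n$; when $x_1 = x_n$, the left-hand side is zero while the right-hand side is nonnegative, so the inequality is trivial. Minimality of $\gamma$ is built into the definition: any $\gamma' < \gamma$ would be exceeded by the sequence attaining the maximum.

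The analogous statement for $\beta$ follows by the same template in simpler form. The supremum in \eqref{formulaforbeta} is already over triples $(x, y, z)$ with $x \neq z$ --- a finite set requiring no shortcutting reduction --- and the denominator $d(x, y) + d(y, z)$ is automatically positive, since $d(x, y) + d(y, z) = 0$ would force $x = y = z$. The $\beta$-triangle inequality \eqref{betainequality} and minimality of $\beta$ then follow by the same rearrangement.

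I expect the main obstacle to be the shortcutting step, and in particular the verification that collapsing loops cannot produce a zero-length denominator. This positivity check is what legitimately reduces the ostensibly infinite-looking supremum in \eqref{formulaforgamma} to one over the finitely many simple sequences in $X$.
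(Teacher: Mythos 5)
Your proposal is correct, and its skeleton --- take $\gamma$ to be the supremum of the ratios, check that it satisfies the polygon inequality (trivially when $x_1 = x_n$, by rearrangement otherwise), and check that it cannot be lowered --- matches the paper's proof. Where you genuinely diverge is in how finiteness of the supremum is secured. The paper simply asserts that the supremum is ``a well-defined, finite number''; the implicit justification is a direct bound: the numerator is at most $\max_{x,y} d(x,y)$, while the denominator of any admissible ratio is at least $\min\{d(x,y) : x \neq y\} > 0$, because $x_1 \neq x_n$ forces at least one consecutive pair to be distinct. Your shortcutting reduction --- contracting a repetition $x_i = x_j$ removes the nonnegative terms $d(x_i,x_{i+1}), \dots, d(x_{j-1},x_j)$ while leaving the numerator and the remaining terms unchanged (since $d(x_i, x_{j+1}) = d(x_j, x_{j+1})$) --- accomplishes the same thing but buys more: the supremum is actually attained on the finitely many injective sequences, so your minimality argument can point to a witnessing sequence, whereas the paper must run a $\varepsilon$-approximation argument from the definition of supremum. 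Both routes are valid; yours fills in a step the paper leaves to the reader, at the cost of the (correct, but slightly fiddly) verification that contraction preserves the endpoints and keeps the denominator positive.
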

\begin{proof}
Without loss of generality, we can assume that the space $X$ contains at least $2$ elements (otherwise $(X,d)$ is trivially a $\gamma-$polygon space for any $\gamma\geqslant 1$). Let
\[
s := \sup\left\{ \frac{d(x_1,x_n)}{\sum_{k=1}^{n-1} d(x_k,x_{k+1})} \ : n\in \N, \ x_1,\dots ,x_n\in X,\ x_1\neq x_n \right\}
\]
\noindent
and observe that it is a well-defined, finite number, which satisfies (for any choice of $x_1,x_2\in X$ with $x_1\neq x_2$)
$$s \geqslant \frac{d(x_1,x_2)}{d(x_1,x_2)} = 1.$$

\noindent
Furthermore, let us fix $n\in\N$ and elements $x_1,\dots,x_n\in X$ and consider two cases:
\begin{itemize}
    \item if $x_1 = x_n$ then 
    \[
    d(x_1,x_n) = 0 \leqslant s\cdot \sum_{k=1}^{n-1}\ d(x_k,x_{k+1});
    \]
    
    \item if $x_1\neq x_n$ then
    \begin{eqnarray*}
    d(x_1,d_n) &=& \frac{d(x_1,x_n)}{\sum_{k=1}^{n-1}\ d(x_k,x_{k+1})} \cdot \sum_{k=1}^{n-1}\ d(x_k,x_{k+1})
    \\
    &\leqslant & \sup\left\{ \frac{d(y_1,y_n)}{\sum_{k=1}^{n-1}\ d(y_k,y_{k+1})} \ : \ y_1,\dots ,y_n\in X,\ y_1\neq y_n \right\} \cdot \sum_{k=1}^{n-1}\ d(x_k,x_{k+1})
    \\
    &= & s \cdot \sum_{k=1}^{n-1}\ d(x_k,x_{k+1}).
    \end{eqnarray*}
\end{itemize}

\noindent
This proves that $\gamma \leqslant s.$

In order to prove the reverse inequality suppose, for the sake of argument, that there exists $\eps>0$ such that $\gamma = s-\eps$. From the definition of supremum there exists an $n\in \N$ as well as some elements $x_1,\dots,x_n \in X$ such that 
\[
\frac{d(x_1,x_n)}{\sum_{k=1}^{n-1}d(x_k,x_{k+1})} > s-\eps.
\]
Multiplying both sides of this inequality yields

\[
d(x_1,x_n) > (s-\eps)\cdot \sum_{k=1}^{n-1}d(x_k,x_{k+1}),
\]
which contradicts the assumption that $\gamma < s$. This concludes the first part of the theorem (for $\gamma-$polygon space). In order to prove the second part (a finite semimetric space is a $\beta-$metric space) we follow an analogous reasoning as the one presented above, fixing $n=3$.
\end{proof}

At this point we know that given a finite semimetric space it has to be both $\beta-$metric and $\gamma-$polygon space at the same time. Moreover, we also know that $\beta\leqslant \gamma,$ but in general, these constants may differ. To illustrate that point let us discuss the following example:

\begin{example}
Let $X:=\{1,2,3,4,5\}$ and let $d: X\times X \longrightarrow [0,+\infty)$ be a semimetric given by
\begin{equation}
    d(x,y):=\begin{cases}
    0,& x=y,\\
    1,& |x-y|=1, \\
    20,& |x-y|=4,\\
    4,& \text{otherwise.}
    \end{cases}
\end{equation}
See the following figure for the illustration of this space:

\begin{figure}[H]
    \centering
    \includegraphics[height=5cm]{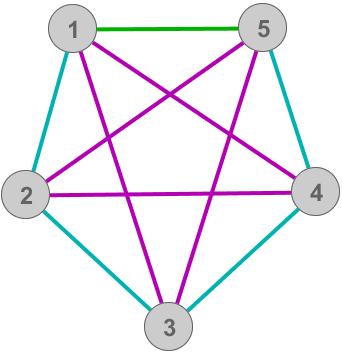}
    \caption{Blue edges denote the semidistance of length $1$, purple edges are of length $4$, while the only green edge is the longest and has the length $20$.}
    \label{fig:some_space}
\end{figure}

A simple calculation reveals that $(X,d)$ is a $4-$metric space, but it is not a $4-$polygon space since for a finite sequence $(x_k)_{k=1}^5$, $x_k:=k$ the constant $\gamma$ has to satisfy:
\[
d(x_1,x_5) =20 \leqslant \gamma \cdot\sum_{k=1}^4 d(x_k,x_{k+1}) = 4\gamma. 
\]

\noindent
This proves that $\gamma\geqslant 5.$ After a few more easy computations one can be convinced that $\gamma=5.$
\end{example}


From a theoretical standpoint, the existence of constants $\beta$ and $\gamma$ suffices to construct an ``abstract mathematical theory''. However, in practice it is preferable to know the precise values of these constants, hence the need for an algorithm enabling the computation of $\beta$ and $\gamma$ in a reasonable time-scope. We will come back to this issue once we discuss optimization and complexity theory in Section \ref{section:optimizationandcomplexitytheory}.

\section{Graph theory through the lenses of semimetric spaces}

We have already defined the elementary concepts of semimetric theory. It is time we invoked the crucibles of graph theory and investigate their interplay with the notions of $\beta-$metric and $\gamma-$polygon spaces intoduced earlier. An additional advantage of our concise review is that we lay down the notational convention used in the sequel. We commence with the definition of a graph:\footnote{The definition of a graph is based on \cite[p. 2]{Diestel2000}, whereas the definition of a weighted graph was taken from \cite[p. 463]{Fletcher1991}.}

\begin{definition}
A pair $G := (V,E)$ is called a graph if $V$ is a finite, nonempty set and 
$$E \subset \{\{x,y\} \ : \ x,y\in V,\ x\neq y \}.$$ 

\noindent
The elements of $V$ and $E$ are called vertices and edges, respectively.

A weighted graph is a pair $(G,\omega)$, where $G = (V,E)$ is a graph, $E\neq \emptyset$ and $\omega : E \longrightarrow (0,+\infty)$ is a positive function, called the weight.
\end{definition}

A couple of remarks concerning this definition are in order. First off, it is usually convenient to utter the phrase ``\textit{Let $G$ be a graph}'' and then refer to the vertex and edge set of $G$ as $V(G)$ and $E(G)$, respectively. This is a convention that we adhere to in the sequel.\footnote{Identical convention can be found in \cite{Diestel2000}.} Furthermore, those acquainted with the graph terminology will surely recognize our graphs to be \textit{undirected} and \textit{simple}. ``\textit{Undirectedness}'' of a graph means that the edges are not oriented, i.e. every edge is a set $\{x,y\}$ rather than an ordered pair $(x,y).$ On the other hand ``\textit{simplicity}'' means that the graph contains no ``\textit{loops}'' (i.e. edges of the form $\{x,x\}$) or \textit{multiedges} (i.e. $E$ is a set and not a multiset). The need for \textit{multiedges} and \textit{multigraphs} will arise, however, towards the end of the paper, hence we take the liberty of including the formal definition of these objects:

\begin{definition}
A pair $\GG :=(V,\EE)$ is called a multigraph if $V$ is a finite, nonempty set and
\[
\EE\subset \bigg\{ (k,\{x,y\})\ : \ x,y\in V,\ x\neq y,\ k\in\N_0 \bigg\}.
\]
The elements of $\EE$ are called multiedges (while the elements of $V$ are still called vertices).
A weighted multigraph is a pair $(\GG,\omega)$, where $\GG= (V,\EE)$ is a multigraph, $\EE\neq \emptyset$ and $\omega:\EE\longrightarrow (0,+\infty)$ is a positive function, called the weight.
\end{definition}

It goes without saying that the index $k$ in the above definition is used to distinguish different edges connecting the same pair of vertices. More importantly, the way we define a weight (on a graph or a multigraph) slightly deviates from the ones existent in the literature - this is because we assume $\omega$ to be a positive function.\footnote{For instance, Bondy and Murty in \cite[p. 50]{Bondy} allow $\omega$ to be a real-valued function while Bl\"aser in \cite{Blaser2008} uses only non-negative rationals as edge weights.}

Having defined what we mean by the term ``graph'' let us proceed with a series of related concepts:\footnote{See \cite{Bondy,Diestel2000,Gross}.}

\begin{definition}\label{def:subgraph etc}
Let $G$ be a graph.
\begin{itemize}
    \item Graph $F$ is called a subgraph of $G$ if $V(F)\subset V(G)$ and $E(F)\subset E(G)$. We denote this situation by writing $F\subset G$.
    \item Graph $P$ is called a path if vertices of $P$ can be arranged in a sequence so that two vertices are adjacent if and only if they are consecutive in this sequence.
    \item Graph $C$ is called a cycle if the removal of any edge in $C$ turns it into a path. If $G$ does not contain any cycle as a subgraph, then it is called a acyclic graph.
    \item Subgraph $H\subset G$ is called a Hamiltonian path/cycle if it is a path/cycle visiting each vertex of $G$ exactly once. The family of all Hamiltonian cycles in $G$ is denoted by $\mathfrak{h}(G).$
\end{itemize}
\end{definition}

A graph $G$ whose family of Hamiltonian cycles is nonempty is sometimes called a \textit{Hamiltonian} graph. The definition of path allows us to introduce one more important notion.

\begin{definition}
A graph $G$ is said to be connected if for any pair of vertices $x,y\in V(G)$ there exists a path $P\subset G$ such that $x,y\in V(P)$.
\end{definition}

The notion of a subgraph introduced in Definition \ref{def:subgraph etc} is inherently independent of the weight function (if such exists) on graph $G$. However, if $\omega$ is a weight on $G$ and $F$ is a subgraph of $G$, then $(F,\omega|_{E(F)})$ is a weighted graph and $\omega|_{E(F)}$ is called an \textit{induced weight}. Furthermore, it is often convenient to speak of a weight of a subgraph (or the whole graph itself), which we define as 
\begin{equation*}
\omega(F):=\sum_{e\in E(F)} \omega(e).
\end{equation*}

It is hard to deny that this definition is a slight abuse of notation - after all, we use the same symbol ``$\omega$'' to weigh both edges and a subgraph. Formally this is a mistake since edges and subgraphs are objects from different ``categories'' - edges are unordered pairs of elements while subgraphs are pairs of vertex and edge sets. However, we believe that such a small inconsistency should not lead to any kind of misunderstanding (one reason being that we use capital letters for graphs and subgraphs and minuscule letters for edges).

We have now gathered all the necessary ingredients to formulate the \textit{Travelling Salesperson Problem} (or TSP for short), which is the focal point of our paper:\footnote{It should be emphasized that there are multiple other ways to define this problem, for example as integer programming problem with constraints on vertex degrees, e.g. \cite{Danzig1959,Laporte1992,Matai2010,Miller1960}.}\\
\vspace{0.01cm}\\
\fbox{%
\parbox{0.95\textwidth}{\large
\textit{For a weighted graph $(G,\omega)$ with $\mathfrak{h}(G)\neq\emptyset$ find a Hamiltonian cycle $H$ with minimal weight.}
}
}
\\\vspace{0.01cm}\\

Travelling salesperson problem is one of the most famous mathematical puzzles\footnote{Timothy Lanzone even directed a movie ``\textit{Travelling Salesman}'', which premiered at the International House in Philadelphia on June 16, 2012. The thriller won multiple awards at Silicon Valley Film Festival and New York City International Film Festival the same year.} and a detailed account of its history is far beyond the scope of this paper. Instead, we recommend just a handful of sources - for an indepth discussion on both history and possible attempts at solving this problem see: \cite{Applegate2006,Cook2012,Fleischmann1988,Fonlupt1992,Gutin2001,Laporte1992,Reinelt1994,Snyder2019,Van2020}.

Our formulation of the TSP requires the weighted graph $G$ to satisfy $\mathfrak{h}(G)\neq\emptyset.$ This is fairly obvious, because if there is no Hamiltonian cycle it is impossible to look for one with minimal weight! That being said, in the sequel we will usually consider the TSP on a complete weighted graphs, which trivially satisfy the condition $\mathfrak{h}(G)\neq\emptyset.$ The reason we may restrict our attention to complete graphs is that it is always possible to define $\omega(\{x,y\})$ as a ``sufficiently large value'' for edges $\{x,y\}$ which are not in the original graph $G$. A formal result which encapsulates this intuition is provided below:

\begin{theorem}(well-known in mathematical folklore)\\
Let $(G,\omega_G)$ be a graph with $n:=|V(G)|\geqslant 3$ vertices. There exists a complete weighted graph $(K_{n}, \omega)$ such that:
\begin{enumerate}
\item $G$ is a subgraph of $K_n$ and $\omega|_G = \omega_G$.
\item If $H$ is a minimal Hamiltonian cycle in $(K_n,\omega)$ and $\omega$ is a subgraph extension of $\omega_G$, then exactly one of the following holds:
\begin{itemize}
	\item $ \omega (H) \leqslant \omega(G)$ and $H$ is a minimal Hamiltonian cycle in $G$ as well, or
	\item $ \omega (H) > \omega(G)$ and $G$ does not admit any Hamiltonian cycle. 
\end{itemize}
\end{enumerate}
\end{theorem}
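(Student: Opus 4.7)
The strategy is a padding argument: assign to every ``missing'' edge of $G$ a common weight $M$ so large that any Hamiltonian cycle using such an artificial edge cannot possibly win the minimisation contest against the original graph. Concretely, I would set $\omega(e):=\omega_G(e)$ for $e\in E(G)$ and $\omega(e):=M$ for every $e\in E(K_n)\setminus E(G)$, where $M$ is any fixed constant strictly greater than $\omega(G):=\sum_{e\in E(G)}\omega_G(e)$ (for instance $M:=\omega(G)+1$). Since $n\geqslant 3$, the complete graph $K_n$ is Hamiltonian, so the minimisation in $(K_n,\omega)$ is meaningful, and clause 1 of the theorem is immediate from the definition of $\omega$.

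For clause 2, I would split according to whether $\mathfrak{h}(G)$ is empty. Suppose first $\mathfrak{h}(G)\neq\emptyset$ and fix any $H_0\in\mathfrak{h}(G)$; then $\omega(H_0)=\omega_G(H_0)\leqslant\omega(G)<M$. Consequently a minimal Hamiltonian cycle $H$ of $(K_n,\omega)$ cannot contain any edge outside $E(G)$, because a single such edge already drives the weight past $\omega(H_0)$. Hence $H\subset G$, so $H\in\mathfrak{h}(G)$. Since every Hamiltonian cycle of $G$ is in turn a Hamiltonian cycle of $K_n$ of the same weight, $H$ also minimises the weight over $\mathfrak{h}(G)$, and $\omega(H)\leqslant \omega(H_0)\leqslant \omega(G)$; the first alternative therefore holds. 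If on the other hand $\mathfrak{h}(G)=\emptyset$, every Hamiltonian cycle of $K_n$ is forced to use at least one edge from $E(K_n)\setminus E(G)$, so its weight is at least $M>\omega(G)$; the second alternative holds.

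Finally, the two alternatives are mutually exclusive: the first exhibits a Hamiltonian cycle in $G$, while the second asserts $\mathfrak{h}(G)=\emptyset$. This is why the theorem insists on ``exactly one''. There is no real obstacle here; the only subtlety is the choice of $M$, which must be strictly larger than the full weight $\omega(G)$ (taking $M$ to be, say, the maximum edge weight of $G$ would be insufficient once $G$ has many cheap edges). Once $M$ is selected correctly, the equivalence between Hamiltonicity of $G$ and Hamiltonicity of $K_n$ restricted to $E(G)$ transfers minimality between the two problems automatically.
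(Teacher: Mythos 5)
Your proposal is correct and follows essentially the same padding argument as the paper: extend $\omega_G$ to $K_n$ by giving every missing edge a prohibitively large common weight, then observe that a minimal Hamiltonian cycle in $(K_n,\omega)$ either avoids all artificial edges (and is then minimal in $G$) or certifies that $G$ has no Hamiltonian cycle. The only cosmetic difference is that you take $M:=\omega(G)+1$ strictly above $\omega(G)$, whereas the paper uses $M:=\omega(G)$ itself and relies on the positivity of the remaining edge weights to force the strict inequality; both choices work.
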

\begin{proof}
Let $K_n$ be a complete graph on $n$ vertices of the graph $G$ and let the weight $\omega$ on $K_n$ be defined with the formula:
\begin{equation}\label{eqn:4}
 \omega(\{x,y\}) := \begin{cases}
\omega_G(\{x,y\}), & \text{if } \{x,y\}\in E(G),\\
\sum_{e\in E(G)}\ \omega_G(e), & \text{otherwise. }\\
\end{cases}
\end{equation}

\noindent
Immediately it follows that $G$ is a subgraph of $K_n$ and $\omega|_G =\omega_G,$ which is the first part of the theorem.

For the second part let $\omega$ be a subgraph extension of $\omega$ and let $H$ be a minimal Hamiltonian cycle in $(K_n,\omega).$ We divide our reasoning into two separate cases:
\begin{itemize}
    \item Suppose that $\omega(H)\leqslant \omega(G).$ Definition (\ref{eqn:4}) implies that $H$ cannot contain any edge which is not in $E(G),$ since otherwise we would have 
    $$\omega(H) > \omega(G) = \sum_{e\in E(G)}\ \omega(e).$$
    
    \noindent
    Furthermore, there cannot exist a Hamiltonian cycle $H'$ on $(G,\omega_G)$ such that $\omega(H') < \omega_G(H),$ simply because that would prevent $H$ from being a minimal Hamiltonian cycle on $(K_n,\omega).$ Therefore we have to conclude that $H$ is a minimal Hamiltonian cycle on $(G,\omega_G).$
    
    \item Suppose that $\omega(H)>\omega(G).$ If there existed a minimal Hamiltonian cycle $H'$ on $G$ then we would have 
    $$\omega(H') \leqslant \omega(G) < \omega(H).$$
    
    \noindent
    This contradicts the fact that $H$ is a minimal Hamiltonian cycle in $K_n$ (since there is a cycle with smaller weight, namely $H'$), which concludes the proof.  
\end{itemize}
\end{proof}


In the light of the above, we may restrict our focus to the case where the graph in the formulation of the TSP is a complete weighted graph 
 i.e., $G=(K_n,\omega)$. For such graphs we are able to impose the semimetric structure on $V(G)$ as follows:\footnote{Clearly, the function defined in \eqref{semimetriconwholegraph} is both symmetric (due to the undirected nature of the graph) and satisfies $d(x,y)=0\iff x=y$ for all $x,y\in V$.}
\begin{equation}\label{semimetriconwholegraph}
d(x,y):=\begin{cases} 
\omega\left(\{x,y\}\right),& \text{ where }x\neq y;\\
0,& \text{ where } x=y.
\end{cases}
\end{equation}

This allows us to define some particular subclasses of complete weighted graphs as follows:

\begin{definition}
A graph $G=(K_n,\omega )$ is called
\begin{itemize}
    \item a metric graph if $d$ is a metric on $V(G)$;
    \item a $\beta-$metric graph if $d$ satisfies \eqref{betainequality};
    \item a $\gamma-$metric graph if $d$ satisfies \eqref{polygonalinequality}.
\end{itemize}
\end{definition}

Depending on the additional conditions imposed on the graph weights, we can obtain more specific variants of TSP. In particular, when $d$ satisfies the triangle inequality, we can talk about the metric TSP (cases where particular instance is considered on a plane are sometimes referred to as Euclidean Traveling Salesman Problem). While it might seem the metric TSP should be much easier to solve, it was shown by Christos Papadimitriou that even in the case of Euclidean metric setting, the Traveling Salesman Problem remains NP-hard.\footnote{See \cite{Papadimitriou1977} for the proof.} It is therefore reasonable to consider approximation algorithms as a valuable asset in the array of techniques designed for coping with such problems.\footnote{It is worth noting that multitude of exact algorithms for TSPs which work in exponential time are known (for survey of branch-and-bound methods see \cite{Balas1985}, several other types of methods are discussed in \cite{Fischetti2007} -- for more detailed approach see also the references within these surveys). but they are out of the scope of this paper and they will not be discussed in the latter part of the article. Since reasonable approximations (which are often within $1-2\%$ error margin from the optimal solution)   can be found in fractions of the time needed to perform the exact-search, we will focus solely on these techniques.}

\section{Optimization and complexity theory}
\label{section:optimizationandcomplexitytheory}

We believe that our paper might be of interest to both researchers in algorithm-related branches as well as those working wiht various generalizations of metric spaces. Bearing that in mind, the focal point of the current subsection is a brief summary of fundamental notions and facts in optimization and complexity theory. 

We start off with the idea of ``\textit{optimization problem}'' itself:\footnote{We are aware that some authors define the optimization problem otherwise, see e.g. \cite[Section 1.4]{Ausiello1999}. For a more advanced discussion of optimization problems we highly recommend \cite[Section 1.1.1]{Andreasson} and \cite[Sections 1.1 and 1.3]{GlobalOptimization}.}
\begin{definition}
Let $X$ be a nonempty, finite set and let $f:X\longrightarrow (0,+\infty)$ be an arbitrary function. A problem of finding an element $x^* \in X$ which satisfies
\begin{equation}
f(x^*)=\min_{x\in X} f(x),\label{minimality_condition}
\end{equation}

\noindent
is called an optimization problem. 
\end{definition}

Set $X$ and function $f$ in the optimization problem are called the \textit{search space} and the \textit{target function}, respectively. Furthermore, every element satisfying (\ref{minimality_condition}) is said to be an \textit{optimal solution} (there might be more than one such element), while the elements of $X$ are referred to as \textit{feasible solutions}.

At first glance, solving a optimization problem on a finite set seems trivial. Why not go through all elements of $X$ one by one, checking the values $f(x)$ and choosing the minimal one? Well, in theory this is a perfectly valid approach but in practice it is too cost-inefficient (takes too much time and memory space). Take, for instance, the Travelling Salesperson Problem, which is an optimization problem - for a given weighted graph $(G,\omega)$ it suffices to put $X := \mathfrak{h}(G)$ and $f:=\omega$. If $G = K_{61}$ then there are $\frac{1}{2}\cdot 60!$ Hamiltonian cycles,\footnote{In general, for a complete graph $K_n$ there are $\frac{1}{2}\cdot (n-1)!$ Hamiltonian cycles. This is because there are $n!$ possible arrangments of vertices $x_1,\dots,x_n$ and each such arrangement describes a Hamiltonian cycle. However, this description is not unique, for exmaple the sequence $(x_1,\dots,x_n)$ describes the same Hamiltonian cycle as $(x_2,\dots,x_n,x_1),\ (x_3,\dots,x_n,x_1,x_2)$ and so on. Furthermore, the ``inverted'' sequences $(x_n,\dots,x_1),\ (x_1,x_n,\dots,x_2),\ (x_2,x_1,x_n,\dots,x_3)$ also describe the same Hamiltonian cycle, so in fact there are always $2n$ descriptions for the same cycle. Hence, the total number of possible arrangements (that is $n!$) has to be divided by $2n$, which yields the final answer of $\frac{1}{2}\cdot (n-1)!$.} which is more than the number of atoms in the observable universe (roughly $10^{80}$ particles)! Without divine intervention solving such a gargantuan problem is simply inconceivable. 

Let us pause for a moment and see how the tables have turned. What seemed like an innocuous task now appears as an insurmountable hinderance. Luckily, not all is lost. There is a way out but we need to allow for a small ``hustle'': instead of searching for the optimal solution, we look for an \textit{approximate} solution, which is (in some sense) ``good enough''. More than that, we usually look for an approximation \textit{method} (rather than a single approximate solution for a fixed optimization problem), i.e. an algorithm which can be applied to a whole family of optimization problems:\footnote{An interested Reader is invited to compare our definition with the one of approximation algorithm given by Ausiello et al. \cite[Def. 3.5]{Ausiello1999}. Another clever approach can be found in Crescenzi, Vigo et al. \cite[Def. 2]{Crescenzi1999}.} 

\begin{definition}
Let $s\geqslant 1$ and let $\mathcal{P}$ be a set of indices for a family of optimization problems with target functions $f_p : X_p\longrightarrow (0,+\infty).$ A function $\algo:\mathcal{P}\to \bigcup_{p\in \mathcal{P}} X_p$ is called an $s$-optimal approximation method for $\mathcal{P}$ if for every $p\in\mathcal{P}$ we have
\begin{gather*}
    \algo(p) \in X_p\hspace{0.4cm}\text{and}\hspace{0.4cm} \frac{f_p\left(\algo(p)\right)}{f_p\left(x^*_p\right)} \leqslant s,
\end{gather*}

\noindent
where $x_p^*$ is some optimal solution of the $p-$th optimization problem.
\end{definition}

With such terminology at our disposal, we can say that the rest of the paper is devoted to finding $s$-optimal approximation methods for the Travelling Salesman Problem. We will focus on two methods:
\begin{itemize}
    \item Minimal spanning tree method on metric and $\gamma-$polygon graphs (Section \ref{doubleminimalspanningtreemethod});
    \item Christofides algorithm on $\gamma-$polygon graphs (Section \ref{christofides section}).
\end{itemize}

Before we proceed with the analysis of these approximation methods, let us close the current section with a couple of words regarding the basic principles of time complexity of algorithms. Please bear in mind, however, that our exposition is extremely rudimentary - we refer an inquisitve Reader to the vast literature on the subject, see the books of Oded Goldreich \cite[Section 1.2.3.4]{Goldreich2008} and \cite[Section 1.3.5]{Goldreich2010}, as well as \cite{Arora2009,Ausiello1999,Du2011,Papadimitriou1998}.

Let $\mathcal{P}$ be a set of indices for a family of optimization problems and let $\mathcal{M}:\mathcal{P}\longrightarrow \N$ be a function, which to every $p\in\mathcal{P}$ assigns the number of memory units (within the scope of the discussed model of computation, say the traditional Turing machine) used to describe the $p-$th problem. We say that an algorithm $\algo:\mathcal{P}\longrightarrow \R$ has computational (time) complexity  at most $F_{\algo}:\N \longrightarrow\N$ if there exists a constant $\alpha\geqslant 0$ such that for $p-$th problem the algorithm $\algo$ requires no more than $\alpha \cdot F_{\algo}(\mathcal{M}(p))$ units of computation. Whenever this holds, we say that $\algo$ can be executed in time $\mathcal{O}(F_\algo (n)).$\footnote{This definition is equivalent to the one introduced in \cite{Cormen2009}.}

We are now in position to go back to the problem that we left off after Theorem \ref{lamma} concerning the computation of constant $\beta$ for a finite semimetric space. A naive brute-force approach which calculates every quotient of the form $\frac{d(x,z)}{d(x,y) + d(y,z)}$ and picks the one with the heighest value has time complexity $\mathcal{O}(n^3)$, where $n$ is the cardinality of the semimetric space $X$. The problem of finding $\gamma$ is slightly more nuanced, but (surprisingly) has the same time-complexity:

\begin{theorem}
Let $(X,d)$ be a finite semimetric space consisting of $n$ distinct points.
The constant $\gamma$ such that $(X,d)$ satisfies the $\gamma$-polygon inequality can be found with time-complexity $\mathcal{O}(n^3)$.
\label{calculating_gamma}
\end{theorem}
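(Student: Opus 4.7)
The plan is to reinterpret the formula \eqref{formulaforgamma} as an all-pairs shortest path problem in the complete weighted graph induced by $d$, and then invoke a standard dynamic programming routine. More precisely, for every ordered pair of distinct points $u,v\in X$, define
\[
\sigma(u,v):=\inf\left\{\sum_{k=1}^{n-1}d(x_k,x_{k+1}) \ :\ n\in\N,\ x_1=u,\ x_n=v,\ x_1,\dots,x_n\in X\right\},
\]
i.e.\ the length of a shortest path from $u$ to $v$ in the weighted complete graph on $X$ with edge weights $d$. I claim that $\gamma$ equals the maximum of the ratios $d(u,v)/\sigma(u,v)$ taken over all pairs $u\neq v$.

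To justify this, I would first observe that in the supremum defining $\gamma$ we may restrict attention to sequences $x_1,\dots,x_n$ with pairwise distinct entries: if some intermediate vertex is repeated, say $x_i=x_j$ for $i<j$, then cutting out the fragment $x_{i+1},\dots,x_j$ yields a shorter sequence with the same endpoints, strictly decreasing the denominator (all weights are positive) while leaving the numerator unchanged, hence weakly increasing the ratio. Since $X$ is finite, only finitely many simple paths exist, so both the supremum in the definition of $\gamma$ and the infimum defining $\sigma(u,v)$ are actually attained. The equality
\[
\gamma=\max_{u\neq v}\frac{d(u,v)}{\sigma(u,v)}
\]
then follows by grouping the supremum over all sequences according to their endpoints.

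With that reduction in hand, the algorithmic part is standard. I would compute the $n\times n$ matrix $\bigl(\sigma(u,v)\bigr)_{u,v\in X}$ by the Floyd--Warshall algorithm, which for positively weighted complete graphs runs in time $\mathcal{O}(n^3)$; this is permissible since the weight matrix of the underlying complete graph is exactly the semimetric $d$ (we can set the diagonal to $0$), and positivity of weights rules out negative cycles. After this preprocessing, one passes through the $n(n-1)$ ordered pairs $(u,v)$ with $u\neq v$, computes the quotient $d(u,v)/\sigma(u,v)$ in constant time, and keeps the running maximum. This postprocessing costs $\mathcal{O}(n^2)$, so the whole procedure fits into $\mathcal{O}(n^3)$.

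The only conceptual obstacle is the first step — convincing oneself that the supremum over arbitrary sequences coincides with the maximum over simple paths, i.e., over the values $\sigma(u,v)$. Once that shortcutting argument is in place, the rest reduces to quoting Floyd--Warshall. An alternative would be to run Dijkstra's algorithm from each source vertex, giving $\mathcal{O}(n^2\log n)$ with a binary heap or $\mathcal{O}(n^2)$ per source (hence $\mathcal{O}(n^3)$ total) with a plain array; either route hits the claimed bound.
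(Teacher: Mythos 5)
Your proposal is correct and follows essentially the same route as the paper: reduce the computation of $\gamma$ to an all-pairs shortest path problem via Floyd--Warshall and then maximize the ratio $d(u,v)/D(u,v)$ over pairs, for a total of $\mathcal{O}(n^3)$. Your explicit shortcutting argument justifying that the supremum over arbitrary sequences is attained on simple paths is a welcome addition that the paper leaves implicit.
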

\begin{proof}
Let $G:=(K_n,\omega)$ be a complete weighted graph, where $\omega\left(\{x,y\}\right):=d(x,y)$ for all pairs of distinct $x,y\in V(K_n)=X.$ On this graph we execute the Floyd-Warshall shortest paths algorithm,\footnote{See \cite[Chapter 25.2]{Cormen2009} for a thorough description along with a broad commentary, as well as the original paper of Floyd \cite{Floyd1962}.} which returns a function $D: X\times X \longrightarrow [0,\infty),$ where $D(x,y)$ denotes the length of the shortest path between points $x$ and $y$, i.e.

\[
D(x,y):=\min \left\{ \sum_{k=1}^{n-1} d(x_k,x_{k+1}) \ :\ n\in \N,\ x_1 = x,\ x_n = y,\ x_k \in X \text{ for }k \leqslant n\right\}.
\]

\noindent
The execution of the algorithm has time complexity $\mathcal{O}(n^3)$.  

Next, we define a function $\Gamma : \{(x,y)\ :\ x,y\in X,\ x\neq y\}\longrightarrow [0,\infty)$ by the formula 
\[
\Gamma(x,y) := \frac{d(x,y)}{D(x,y)}.
\]

\noindent
Since each edge of $G$ has positive weight, then $\Gamma$ is well-defined. Furthermore, $\Gamma(x,y)\geqslant 1$ since the shortest path between any distinct elements $x$ and $y$ is no longer than the edge $\{x,y\}$ - in other words it must be the case that $D(x,y)\leqslant d(x,y)$.

Finally, we observe that 
$$\gamma = \max_{(x,y)}\ \Gamma(x,y).$$

\noindent
Computationally, in order to find the maximum on the right-hand side of the equation we need to check $\frac{n(n-1)}{2}$ values of $\Gamma$ (due to the symmetry $\Gamma(x,y) = \Gamma(y,x)$). Therefore, the time-complexity of the whole procedure (including the Floyd-Warshall algorithm) is $\mathcal{O}(n^3).$\footnote{This time-complexity is estimated without using any kind of parallel computations, but can be further improved by employing these techniques (for example via using parallel Dijkstra algorithm).} 
\end{proof}

\section{Minimal spanning tree method on metric, $\beta-$metric and $\gamma-$polygon graphs}
\label{doubleminimalspanningtreemethod}

As far as we are aware, the first application of $\beta$-metric spaces to combinatorial optimization problems dates back to 1994 and the paper by Bandelt, Crama and Spieksma \cite{Bandelt1991}, where the authors use the relaxed version of the triangle inequality to construct approximation algorithms for multi-dimensional assignment problems with decomposable costs.\footnote{For modern applications in other combinatorial problems see e.g. \cite{Chen2018,Chen2020}.} The idea of using $\beta-$metric spaces swiftly gained recognition and it wasn't long before consecutive authors applied $\beta-$metric inequality to the Travelling Salesperson Problem - see \cite{Andreae1995,Andreae2001,Bender2000,Mohan2017,Momke2015}. On the hand, to the best of our knowledge, there are no known results in the literature which employ the $\gamma-$polygon inequality in the context of the Travelling Salesperson Problem.  

The crucial concept with which we will be preoccupied throughout this section is that of a minimal spanning tree:

\begin{definition}
Let $G$ be a graph.
\begin{itemize}
    \item A connected, acyclic subgraph $T\subset G$ is called a tree.
    \item A tree $T$ which satisfies $V(T)=V(G)$ is called a spanning tree of $G$.
    \item The family of all spanning trees of $G$ is denoted by $\mathcal{S}(G)$.
\end{itemize}

\noindent
If $(G,\omega)$ is a weighted graph, then any tree $T\in\mathcal{S}(G)$ which satisfies
\[
\omega(T) = \min_{T^\prime \in \mathcal{S}(G)} \omega(T^\prime)
\]
\noindent
is called a minimal spanning tree (or MST for short).
\end{definition}

Before we proceed with laying the basic outline of the minimal spanning tree method method let us introduce the concept of a tree traversal . This notion will significantly facilitate our future discussion:\footnote{The definition is based on \cite[p.10]{Diestel2000}.}

\begin{definition}
Let $G$ be a graph. 
\begin{itemize}
    \item An $n$-element sequence of vertices $(x_1,\dots,x_n)$ such that $\{x_k,x_{k+1}\}\in E(G)$ for every $k<n$ is called a walk (on graph $G$).
    \item A walk on graph $G$ is said to be closed if it starts and ends at the same vertex.
\end{itemize}

\noindent
If $T$ is a tree on graph $G$, then any walk on $T$ which visits its every edge exactly twice is called a tree traversal.
\end{definition}

We are now in position to formulate the general framework of the \textit{minimal spanning tree method} (or the \textit{MST method} for short):
\begin{description}
    \item[step 1.\hspace{0.25cm}] Given a complete weighted graph $G := (K_n,\omega)$ we find (one of) its minimal spanning tree(s) $T$.
    \item[step 2.\hspace{0.25cm}] Via a depth-first search (or DFS for short) on $T$ we construct a tree traversal (which depends on the root of the algorithm). 
    \item[step 3.\hspace{0.25cm}] We perform a shortcutting procedure on the tree traversal (from the previous step) to obtain a Hamiltonian cycle on $G$. 
\end{description}

The first two steps are widely recognized and can be found in numerous sources.\footnote{See \cite[Chapter 3.10]{Deo1974} or \cite[Chapter 12]{Papadimitriou1998} for the algorithms generating a minimal spanning tree, and \cite[Chapter 22.3]{Cormen2009} for the DFS algorithm.} Therefore, we will only discuss step 3, i.e., the shortcutting procedure, which is relatively less known.  

Let $G := (K_n,\omega)$ be a complete weighted graph on $n\in\N, n\geqslant 3$ vertices and let $T$ be a minimal spanning tree\footnote{We say that $T$ is ``\textit{a}'' rather than ``\textit{the}'' minimal spanning tree since for a given graph there might be more than one minimal spanning tree.} of $G$. We choose an arbitrary vertex $x_1\in V(T),$ which we refer to as the \textit{root of the tree}, and perform a DFS to obtain a tree traversal $(x_1,\ldots, x_{2n-1}).$ Next, we define $(y_1,\dots,y_n,y_{n+1})$ as the sequence obtained from $(x_1,\dots,x_{2n-1})$ by the \textit{shortcutting procedure}:
\begin{equation}\label{definition_of_yi}
    y_i:=\begin{cases}
    x_1, & i=1, n+1,\\
    x_{m(i)}, & i=2,\ldots,n,
    \end{cases}
\end{equation}

\noindent
where 
$$m(i):=\min\bigg\{j: i\leqslant j \leqslant 2n-1 \ \text{and} \ x_j \not\in \{x_1,\dots,x_{j-1} \} \bigg\}.$$ 

\noindent
At first glance, the definition (\ref{definition_of_yi}) may seem a bit daunting. However, there is a simple recipe for obtaining the shortcut $(y_1,\dots,y_n,y_{n+1}).$ We go through the elements of $(x_1,\dots,x_{2n-1})$ one by one and cross out every ``repetition'' (an element that has already appeared earlier in the sequence) except for $x_{2n-1}$ which is the same as $x_1$. In the ``shortcutted'' sequence $C_T := (y_i)_{i=1}^{n+1}$ every vertex (except for the root $x_1$) appears precisely once (after all, we did cross out all repetitions other than $x_1=x_{2n-1}$). It is easy to see that $C_T$ is in fact a Hamiltionian cycle on $G,$ which we call a \textit{$(T,x_1)-$shorcut} of the graph $G$.\footnote{We should bear in mind that there might be multiple shorcuts on a given graph - see Example \ref{example2}.} 

In order to reinforce our intuition, let us consider the following example of the shortcutting procedure:

\begin{example}\label{example2}
Consider a complete graph $K_6$ with vertices $V:=\{1,\dots,6\}$. Let $T$ be a spanning tree with the following set of edges 
\[
E(T)=\bigg\{\{1,2\},\{2,3\},\{3,4\},\{3,5\},\{5,6\}\bigg\}.
\]
\begin{figure}[H]
    \centering
    \includegraphics[width=0.4\textwidth]{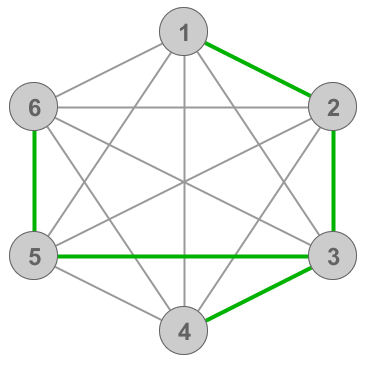}
    \caption{Spanning tree $T$ on the complete graph $K_6$.}
    \label{fig:K6}
\end{figure}

Let us consider a tree traversal starting at the vertex $1$ (the root of the tree $T$). We subsequently visit the vertices $2$ and $3$ - then we are forced to make a choice. Let us proceed with vertices $5$ and $6$. We then backtrack to $3$ and visit $4$. After retracing our steps back to $1$ we obtain a tree traversal $R_1$:
\[
R_1:=(1,2,3,5,6,5,3,4,3,2,1).
\]
Performing the shortcutting procedure we remove those vertices which appear multiple times (except for the final one) in the sequence $R_1.$ The final result is the Hamiltonian cycle \[C_1:=(1,2,3,5,6,4,1).\]

\begin{figure}[H]
    \centering
    \includegraphics[width=0.4\textwidth]{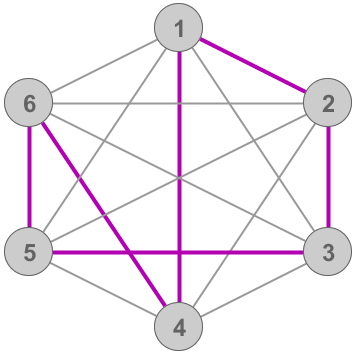}
    \caption{Hamiltonian cycle $C_1$.}
    \label{fig:K6C1}
\end{figure}

In order to see that $C_1$ is not the only possible $(T,1)-$shortcut, we note that it not mandatory to choose $5$ after $3$. We may as well choose $4$, backtrack to $3$, and only then go to $5$ and $6$ This results in a tree traversal 
\[
R_2:=(1,2,3,4,3,5,6,5,3,2,1).
\]
Applying shortcutting procedure to $R_2$ yields a different Hamilton cycle 
\[C_2:=(1,2,3,4,5,6,1).\]

\begin{figure}[H]
    \centering
    \includegraphics[width=0.4\textwidth]{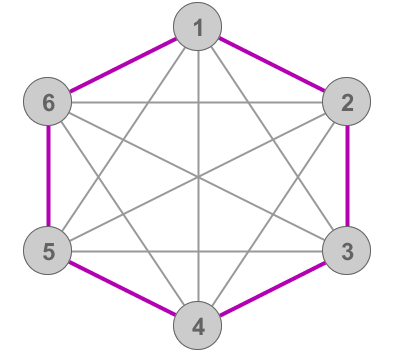}
    \caption{The Hamiltonian cycle $C_2$.}
    \label{fig:K6C2}
\end{figure}
\end{example}

\section{Performance of the MST method}

We already know that the MST method returns a Hamiltonian cycle for a given graph $G$. It is high time we examined how close this cycle is to being a solution to the TSP. To this end we compare the weights of the following two subgraphs of $G$: the optimal solution to the TSP $C^*$ and the minimal spanning tree $T$. In fact, we have the following inequality:
\begin{equation}\label{eq:comparing optimal and MST}
\omega(T)\leqslant \omega(C^*).
\end{equation}
The proof of this fact can be shortened to a simple observation that removing a single edge from $C^*$ leaves us with some spanning tree $T^*$, whose cost is (by definition of MST) bounded from below by $\omega(T)$.

The theorem below addresses this issue under the assumption that the graph is metric:\footnote{Compare with \cite[Thm. 35.2]{Cormen2009}.}
\begin{theorem}
Let $(K_n,\omega)$ be a complete, metric graph, $T$ be its minimal spanning tree and let $x \in V(T)$. Every $(T,x)-$shortcut $S$ satisfies 
\[
\omega(S) \leqslant 2\omega(T) \leqslant 2\omega(C^*).
\]
\label{weightofDFSTcycle}
\end{theorem}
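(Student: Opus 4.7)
The plan is to split the statement into the two claimed inequalities and handle them independently. The right-hand inequality $2\omega(T) \leqslant 2\omega(C^*)$ is a direct consequence of the observation already made in \eqref{eq:comparing optimal and MST}: deleting any edge from the optimal Hamiltonian cycle $C^*$ produces a spanning tree, whose weight cannot fall below $\omega(T)$ by minimality. Doubling both sides finishes this part; no extra work is needed.

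For the inequality $\omega(S) \leqslant 2\omega(T)$ the natural bookkeeping device is the tree traversal $R=(x_1,\ldots,x_{2n-1})$ produced in Step 2 of the MST method. The defining property of a traversal is that it crosses every edge of $T$ precisely twice, so
\[
\sum_{k=1}^{2n-2} \omega(\{x_k,x_{k+1}\}) \;=\; 2\omega(T).
\]
I would then reinterpret the shortcutting recipe \eqref{definition_of_yi} as an edge-by-edge comparison: for each $i\in\{1,\ldots,n\}$ the consecutive shortcut vertices $y_i=x_{m(i)}$ and $y_{i+1}=x_{m(i+1)}$ are joined inside $R$ by the sub-walk $x_{m(i)},x_{m(i)+1},\ldots,x_{m(i+1)}$ (with the convention $m(n+1)=2n-1$). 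These sub-walks concatenate to give the whole traversal $R$ and therefore their weights sum exactly to $2\omega(T)$.

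The key step is now to control the weight of a single shortcut edge by the weight of the corresponding sub-walk. Because $(K_n,\omega)$ is metric, iterating the triangle inequality along $x_{m(i)},x_{m(i)+1},\ldots,x_{m(i+1)}$ yields
\[
\omega(\{y_i,y_{i+1}\})\;=\; d(x_{m(i)},x_{m(i+1)}) \;\leqslant\; \sum_{k=m(i)}^{m(i+1)-1} \omega(\{x_k,x_{k+1}\}).
\]
Summing this estimate over $i=1,\ldots,n$ and using the fact that the sub-walks partition the edge-multiset of $R$, one obtains $\omega(S)\leqslant 2\omega(T)$, and chaining with the first part gives the full conclusion.

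The only delicate point I expect is the bookkeeping claim that the sub-walks $x_{m(i)},\ldots,x_{m(i+1)}$ really concatenate to cover the traversal $R$ without overlap. This follows from the definition of $m(i)$ as the first index past $i$ that produces a fresh vertex: between $m(i)$ and $m(i+1)$ the traversal only revisits vertices already crossed off, so the shortcutting procedure identifies $y_{i+1}$ with $x_{m(i+1)}$ and the next sub-walk begins exactly where the previous one ended. Once this combinatorial fact is in place, the proof is essentially an application of the triangle inequality followed by summation; no additional structural use of metricity is required, which is precisely why the argument will need to be modified (with factor $\gamma$) in the subsequent $\gamma$-polygon analysis.
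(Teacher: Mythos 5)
Your proposal is correct and follows essentially the same route as the paper: the iterated triangle inequality bounds each shortcut edge $d(y_i,y_{i+1})$ by the weight of the sub-walk of the traversal from $x_{m(i)}$ to $x_{m(i+1)}$, and summing over $i$ gives $\omega(S)\leqslant 2\omega(T)$, with the comparison $\omega(T)\leqslant\omega(C^*)$ handled exactly as in \eqref{eq:comparing optimal and MST}. Your extra remark that the sub-walks concatenate without overlap is a slight sharpening (the paper only uses the inequality $\sum_i\sum_{j=m(i)}^{m(i+1)-1}\leqslant\sum_{k=1}^{2n-2}$, which suffices since all weights are positive), but the argument is the same.
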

\begin{proof}
Let $(x_k)_{k=1}^{2n-1}$ be a tree traversal for $T$ and let $(y_k)_{k=1}^{n+1}$ denote the elements in the $(T,x)-$cycle $S$. For every $i\leqslant n$ we have
\begin{equation}
\label{metric_case_assumption}
\begin{split}
 d(y_{i},y_{i+1}) &= d(x_{m(i)},x_{m(i+1)}) \leqslant d(x_{m(i)},x_{m(i)+1}) + d(x_{m(i)+1},x_{m(i+1)})\\ 
 &\leqslant \ldots \leqslant \sum_{j=m(i)}^{m(i+1)-1} d(x_{j},x_{j+1}),
\end{split}
\end{equation}

\noindent
which implies
\[
\omega(S) = \sum_{i=1}^n d(y_i,y_{i+1}) \leqslant \sum_{i=1}^n \sum_{j=m(i)}^{m(i+1)-1} d(x_{j},x_{j+1}) \leqslant \sum_{k=1}^{2n-2} d(x_k,x_{k+1}) = 2\omega(T).
\]

\noindent
This concludes the proof.
\end{proof}

In the language of optimization theory, Theorem \ref{weightofDFSTcycle} says that the MST method constructs a $2-$optimal approximation for the TSP on a \textit{metric} graph. The fact that the graph is metric is crucial, as the triangle inequality is used (repeatedly) in (\ref{metric_case_assumption}). 

Our next goal is to prove a counterpart of Theorem \ref{weightofDFSTcycle} in the case of $\gamma-$polygon graphs.

\begin{theorem}
Let $(K_n,\omega)$ be a complete, $\gamma-$polygon graph and let $T$ be its minimal spanning tree rooted at vertex $x_1$. Every $(T,x_1)-$shortcut $S$ satisfies 
\begin{equation}
\omega(S) \leqslant 2\gamma \omega(T) \leqslant 2 \gamma \omega(C^*).
\label{estimate_with_2gamma}
\end{equation}
\label{weight_of_shortcut_gamma_polygon}
\end{theorem}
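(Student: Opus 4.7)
The plan is to essentially mimic the proof of Theorem \ref{weightofDFSTcycle}, with the telescoping-via-triangle-inequality step replaced by a single application of the $\gamma$-polygon inequality. The second inequality in \eqref{estimate_with_2gamma}, namely $\omega(T) \leq \omega(C^*)$, is already established as \eqref{eq:comparing optimal and MST}, so the entire content of the theorem reduces to proving the first inequality $\omega(S) \leq 2\gamma\omega(T)$.

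First, I would fix notation: let $(x_k)_{k=1}^{2n-1}$ be the tree traversal of $T$ obtained by DFS rooted at $x_1$, and let $(y_k)_{k=1}^{n+1}$ be the vertex sequence of the shortcut $S$, with the indices $m(i)$ defined as in \eqref{definition_of_yi}. The key observation is that for consecutive shortcut vertices $y_i = x_{m(i)}$ and $y_{i+1} = x_{m(i+1)}$, the subsequence $x_{m(i)}, x_{m(i)+1}, \ldots, x_{m(i+1)}$ is a walk on $T$ from $y_i$ to $y_{i+1}$. Applying the $\gamma$-polygon inequality \eqref{polygonalinequality} to this walk directly yields
\[
d(y_i, y_{i+1}) = d(x_{m(i)}, x_{m(i+1)}) \leq \gamma \sum_{j=m(i)}^{m(i+1)-1} d(x_j, x_{j+1}).
\]

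Next, summing this bound over $i = 1, \ldots, n$ and noting that the ranges $[m(i), m(i+1)-1]$ partition $\{1, \ldots, 2n-2\}$ (because the shortcut consumes every index of the traversal exactly once between consecutive ``new'' vertices), I obtain
\[
\omega(S) = \sum_{i=1}^{n} d(y_i, y_{i+1}) \leq \gamma \sum_{k=1}^{2n-2} d(x_k, x_{k+1}) = 2\gamma \omega(T),
\]
where the last equality uses the defining property of a tree traversal, which visits each edge of $T$ exactly twice. Combining this with \eqref{eq:comparing optimal and MST} finishes the proof.

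There is no genuine obstacle here. The only subtle point worth stating carefully (to make the argument airtight) is the partitioning claim about the intervals $[m(i), m(i+1)-1]$, which follows immediately from the recipe ``cross out repetitions'' underlying the shortcutting procedure; one could also remark that this is precisely the step where the proof benefits from the $\gamma$-polygon inequality rather than the $\beta$-triangle inequality, since the lengths of these subwalks are not bounded a priori and iterating the $\beta$-inequality would yield a considerably worse estimate (roughly of order $\beta^{\lceil \log_2(|X|-1) \rceil}$, as already noted in the paragraph following the definition of $\gamma$-polygon spaces).
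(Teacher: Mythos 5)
Your proposal is correct and follows essentially the same route as the paper's own proof: apply the $\gamma$-polygon inequality once to each subwalk $x_{m(i)},\ldots,x_{m(i+1)}$ of the tree traversal, sum over $i$, and use the fact that the traversal covers each edge of $T$ exactly twice, then invoke \eqref{eq:comparing optimal and MST}. Your explicit remark that the index ranges $[m(i),m(i+1)-1]$ cover $\{1,\ldots,2n-2\}$ without overlap is a slightly more careful justification of the double-sum step than the paper gives, but the argument is the same.
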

\begin{proof}
Again, as in Lemma \ref{weightofDFSTcycle} let $(x_k)_{k=1}^{2n-1}$ be a tree traversal for $T$ and let $S = (y_k)_{k=1}^{n+1}$ be its $(T,x_1)-$shortcut. Performing an analogous reasoning to (\ref{metric_case_assumption}) we have
\begin{equation}
\forall_{i=1,\ldots,n}\ d(y_{i},y_{i+1}) = d(x_{m(i)},x_{m(i+1)}) \stackrel{\eqref{polygonalinequality}}{\leqslant} \gamma \sum_{j=m(i)}^{m(i+1)-1} d(x_{j},x_{j+1}).
\label{bmetric_case_assumption}
\end{equation}

\noindent
Consequently, we obtain
\begin{gather*}
\omega(S) = \sum_{i=1}^{n} d(y_i,y_{i+1}) \stackrel{(\ref{bmetric_case_assumption})}{\leqslant} 
\gamma \sum_{i=1}^n \sum_{j=m(i)}^{m(i+1)-1} d(x_{j},x_{j+1})
\leqslant \gamma\sum_{k=1}^{2n-2} d(x_k,x_{k+1}) \leqslant 2\gamma\omega(T),
\end{gather*}

\noindent
which concludes the proof.
\end{proof}

In the language of optimization theory, Theorem \ref{weight_of_shortcut_gamma_polygon} says that our MST method constructs a $2\gamma-$optimal approximation for the TSP. It is natural to examine how the estimate (\ref{estimate_with_2gamma}) places amongst other results of this kind known in the literature. As far as we are aware, the idea of using $\gamma-$polygon structure of a given graph is a new insight and has not been explored thus far. Instead, researchers focused on $\beta-$metric structure and arrived at four approximation methods, which are most popular in the field. The first one is the $\frac{3\beta^2 + \beta}{2}-$optimal approximation method, which was devised by Andreae, Bandelt \cite{Andreae1995} in 1995. This method is nowadays only of historical value since it was improved (in 2001) by Andreae himself to $(\beta^2+\beta)$-approximation algorithm \cite{Andreae2001}. In 2000 Bender and Chekuri \cite{Bender2000} constructed a $4\beta$-optimal approximation method while B\"{o}ckenhauer, Hromkovi\v{c}, Klasing, Seibert and Unge came up with a $\frac{3\beta^2}{2}-$approximation method in 2002.\footnote{See \cite{Bockenhauer2002} for the original paper introducing the $\frac{3\beta^2}{2}-$optimal approximation method, as well as \cite{Krug2013} where the author included a detailed discussion on the error bounds of the algorithm.} The discussion ``which approximation technique is better?'' can be best summarized by the following plot:
\begin{figure}[H]
    \centering
    \includegraphics[width=0.8\textwidth]{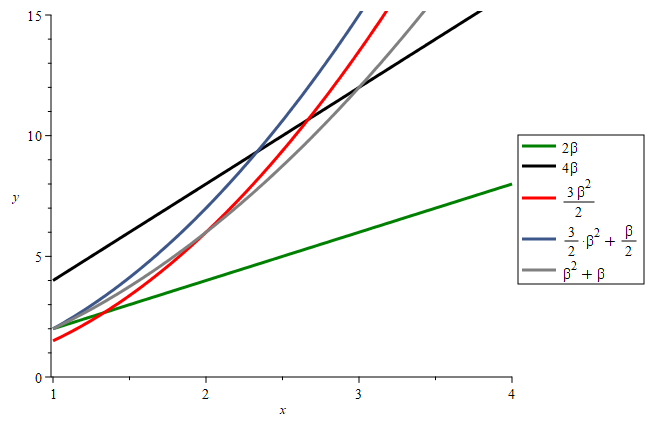}
    \caption{Plot depicting the curves of the approximation constants of the discussed methods.}
    \label{fig:K6C2}
\end{figure}
\noindent
For instance, one can see that the $\frac{3\beta^2}{2}-$optimal approximation method (the red curve) is worse than $(\beta^2 + \beta)-$optimal approximation method (drawn with light gray colour) for $\beta > 2$ but it is better for $\beta < 2$.

Where is the estimate (\ref{estimate_with_2gamma}) situated amongst these optimal approximations using the $\beta-$metric structure of a graph? In order to answer that question we need to have some kind of relation between $\beta$ and $\gamma.$ Obviously, as we remarked earlier we have $\gamma\geqslant \beta,$ but we also need some upper bound on $\gamma$ in terms of $\beta$. We focus our attention on a class of graphs where $\gamma \in [\beta,2\beta)$ and $\beta \geqslant 3.$ These conditions are not very restrictive and Example 1 provides an instance of a graph belonging to such a class. The crucial observation is the if $\gamma \in[\beta,2\beta)$ and $\beta\geqslant 3$ then
\[
2\beta \leqslant 2\gamma < 4\beta \leqslant \beta^2 + \beta < \frac{3}{2}\beta^2 < \frac{3\beta^2+\beta}{2}.
\]

\noindent
This means that there exists a large class of graphs for which our estimate (\ref{estimate_with_2gamma}) is better than anything known in the literature. Furthermore, the class we are discussing is not some ``secretive ensemble''. Given a weighted graph it suffices to compute its $\beta$ and $\gamma$ (which can be done relatively efficiently as proven by Theorem \ref{calculating_gamma}) and check two conditions: $\beta > 3$ and $\gamma < 2\beta$. If both of them are true then the application of estimate (\ref{estimate_with_2gamma}) is highly recommended. 

To close off the current section, we will show that the constant $2\gamma$ is in a sense ``the best possible'' constant in the MST method (unless we modify our approach in the next section). More formally, we show for any $\varepsilon>0$ there exists a complete, weighted graph $(K_n,\omega_n)$ such that
\[
\omega_n(S)>(2\gamma-\varepsilon) \omega_n(T).
\]

\begin{example}
Fix $\gamma\geqslant 1$ and let $(K_n,\omega_n)$ be a complete graph consisting of $n$ distinct vertices $x_1,\dots,x_{n}$, where 
\[
\omega_{n}\left(\{x_i,x_j\}\right):=\begin{cases}
1,& i=1, \ j>1\\
2\gamma,& i,j>1,\ i\neq j.
\end{cases}
\]
It is easy to see that there exists exactly one minimal spanning tree $T$ in such a graph, namely the one with 
\[
E(T)=\bigg\{(x_1,x_k)\  : \ k\in \{2,\dots, n\} \bigg\}.
\]
Since the weight of each edge of $T$ is equal to $1$, we have $\omega_{n}(T)=n-1$.

Performing a depth-first search algorithm on $T$ (starting at $x_1$) we obtain a tree traversal:
\[
W:=(x_1,x_2,x_1,x_3,x_1,\dots,x_n,x_1).
\]
Next, the shortcutting procedure yields a $(T,x_1)-$shortcut 
\[
S:=(x_1,x_2,x_3,\dots,x_n,x_1)
\]
with
\[
\omega_{n}(S)=1+(n-2)\cdot 2\gamma+1 = 2\cdot(1+\gamma(n-2)).
\] 
\noindent
It remains to note that 
\[
\lim_{n\rightarrow\infty}\ \frac{\omega_{n}(S)}{\omega_{n}(T)}=\lim_{n\rightarrow\infty}\ \frac{2\cdot(1+\gamma(n-2))}{(n-1)} = 2\gamma.
\]
\end{example}

\section{Christofides algorithm on $\gamma-$polygon graphs}
\label{christofides section}

In the previous section we laid out a $2\gamma-$approximation method for the TSP (constructing a $(T,x)-$-shortcut from a MST $T$ and the root $x$) and one may wonder whether we can do any better than that? Luckily, the answer is ``yes!'' and our final section of the paper is devoted to presenting the details of this refined approach. 

The core idea of our method comes from the work of Christofides, who came up with a $\frac{3}{2}-$optimal approximation method for the class of metric graphs.\footnote{See \cite{Christofides}.} Our goal in the current section is to extend the scope of his method to the class of $\gamma$-polygon graphs, which is (as far as we are aware) a novelty in the field.

To begin with, we recall a couple of definitions, which are vital in understanding what follows:\footnote{For the source of those definitions, we refer the Reader to the book of Diestel \cite[Chapters 1.-2.]{Diestel2000}.}

\begin{definition}
Let $(V,E)$ be a graph. The number of edges incident to $x\in V$, i.e. 
$$\deg(x):=\left|\left\{ \{x,y\} \in E \ : \ y\in V \right\}\right|,$$

\noindent 
is called the degree of vertex $x.$
\end{definition}

\begin{definition}
Let $(V,E)$ be a graph. A set of edges $E_M\subset E$ is called a matching, if no two edges of $E_M$ share a common vertex, i.e. for any two edges $\{x_1,y_1\},\{x_2,y_2\}\in E_M$ we have
\[
\{x_1,y_1\}\cap \{x_2,y_2\} =\emptyset.
\]
If each vertex has an incident edge in $E_M$, i.e.
\[
\forall_{x\in V} \ \exists_{y\in V} \ \{x,y\}\in E_M,
\]
then the matching $E_M$ is said to be a perfect matching.
\end{definition}

Of course every matching can be viewed as a subgraph of the original graph $(V,E)$-- $E_M$ is the ``new'' set of edges and the endpoints of these edges form the ``new'' set of vertices. The last definition that we need for the sequel is the following:

\begin{definition}
Let $(G,\omega)$ be a weighted graph and let $M\subset G$ be its perfect matching. We say that $M$ is a minimum-weight perfect matching if for any other perfect matching $M'\subset G$ we have $\omega(M)\leqslant \omega(M')$.
\end{definition}

At this point we have gathered all the necessary concepts to revisit the approximate solution for the TSP devised by Christofides. Given a complete weighted graph $(K_n,\omega)$ we follow the steps:

\begin{description}
        \item[step 1.\hspace{0.25cm}] Find the minimal spanning tree $T$ of $G$ (using Boruvka's, Prim's or Kruskal's algorithm for instance).
    
        \item[step 2.\hspace{0.25cm}] Let $V_{odd}$ be the set of all odd vertices (i.e., vertices with odd degrees) of $T$. Let $F$ be a weighted subgraph induced on $G$ by $V_{odd}$.
    
        \item[step 3.\hspace{0.25cm}] Find a minimum-weight perfect matching\footnote{Such matching exists because $F$ is a complete graph with even number of vertices. The latter observation is due to the ``handshaking lemma'' -- see Proposition 1.2.1 in \cite{Diestel2000}.} in $F$ and denote it by $M$. This can be done by applying the original Edmonds' blossom algorithm or one of its subsequent versions.\footnote{The initial version of minimum-weight perfect matching algorithm \cite{Edmonds1965} had complexity of order $\mathcal{O} \left(|E|\cdot |V|^2\right)$. This bound has been consistently improved over the years -- see Tables I and II in \cite{Cook1999} for a detailed exposition.}
    
    \item[step 4.\hspace{0.25cm}] Find an Eulerian walk\footnote{Just as Hamiltonian cycle is a cycle which visits every vertex exactly once, the \textit{Eulerian walk/cycle} (also \textit{circuit}) is a walk which traverses through each edge of the graph exactly once. This walk can be found using either Fleury's algorithm or Hierholzer's algorithm (with time-complexities  $\mathcal{O}(|E|^2)$ and $\mathcal{O}(|E|),$ respectively).}  $W$ in the multigraph $\GG = (V(T), \EE),$ where 
    $$\EE = \bigg\{(0,e) \ :\ e\in E(T)\bigg\} \cup \bigg\{(1,e) \ :\ e\in E(M)\bigg\}.$$


    \item[step 5.\hspace{0.25cm}] Perform shortcutting procedure on $W$, thus obtaining a Hamiltonian cycle $C$.
\end{description}

It might not be obvious at first glance why the multigraph $\GG$ constructed in fourth point of this algorithm necessarily has an Eulerian walk. These doubts are dispelled by the following lemma:\footnote{See Theorem 3.1 in \cite{balakrishnan1997}, Theorem 1.8.1 in \cite{Diestel2000} or Section 4.4 in \cite{Levin2015}.}

\begin{lemma}
If each vertex of a connected (multi-)graph has an even degree, then there exists an Eulerian cycle in this (multi-)graph.
\end{lemma}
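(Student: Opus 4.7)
The plan is to proceed by induction on the number of edges $|\EE|$ of the multigraph $\GG$. The base case is essentially vacuous: if $|\EE|=0$ then connectedness (in the sense used throughout the paper) forces $|V|=1$, and the empty walk at the unique vertex qualifies as an Eulerian cycle. For the inductive step, I would assume the claim for every connected (multi-)graph with strictly fewer edges than $\GG$.

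The first substep is to extract some closed trail from $\GG$. Pick any vertex $v_0$ of positive degree and walk away from $v_0$, choosing an unused incident edge at every stage. The critical observation is that whenever the walk arrives at a vertex $u\neq v_0$, it has consumed an odd number of edges incident to $u$; since $\deg(u)$ is even, at least one unused incident edge remains, so the walk can always continue. Because $\EE$ is finite the walk must eventually halt, and by the parity argument it can only halt at $v_0$. This yields a closed trail $W_0$.

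Next, I would delete the edges of $W_0$ from $\GG$ and denote the resulting multigraph by $\GG'$. Each vertex has lost an even number of incident edges (two per visit of $W_0$), so all degrees in $\GG'$ remain even. A key point for the gluing argument is that every connected component of $\GG'$ containing at least one edge must share a vertex with $W_0$; otherwise connectedness of $\GG$ would be violated, since there would be no path in $\GG$ from $W_0$ to that component. Applying the inductive hypothesis to each such component (each of which has strictly fewer edges than $\GG$) yields an Eulerian cycle within it.

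The final step is to splice everything together: traverse $W_0$, and whenever the traversal first meets a vertex lying in a not-yet-processed component of $\GG'$ possessing a nontrivial Eulerian cycle, pause to run through that component's Eulerian cycle (returning to the same vertex) before resuming $W_0$. The resulting concatenation is a closed walk that traverses every edge of $\GG$ exactly once, i.e., an Eulerian cycle. The subtlest point to keep in mind is the multigraph aspect -- parallel edges must be treated as distinct combinatorial objects, in line with the paper's indexing of multiedges by pairs $(k,\{x,y\})$ -- but with that convention the whole argument transfers verbatim from the simple graph setting.
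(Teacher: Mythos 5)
Your proof is correct: it is the classical Hierholzer/Euler induction (peel off a closed trail via the parity argument, recurse on the even-degree components of the remainder, and splice), and the delicate points --- termination of the initial trail only at $v_0$, preservation of even degrees after deletion, every edge-containing component of $\GG'$ meeting $W_0$ via connectedness, and treating parallel multiedges $(k,\{x,y\})$ as distinct --- are all handled. The paper itself gives no proof of this lemma, only citations to standard references, and your argument is essentially the one found there, so there is nothing further to reconcile.
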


As far as time complexity of Christofides' algorithm is concerned, we remark that in the worst case scenario we have to look for a minimum-weight perfect matching in the whole graph $G$ (when performing point 3.). This is arguably the choke-point of the whole procedure. The minimum-weight perfect matching can be found with the use of Gabow's algorithm\footnote{See \cite{Gabow1974,Gabow1990}.} with time complexity $\mathcal{O}(|V(G)|^3),$ although there are more efficient methods if all weights are natural numbers (which we do not assume in this paper). The time complexity of Gabow's algorithm also sets the complexity for the whole procedure. 

In the case of metric graphs, Christofides proved\footnote{See \cite[Thm. 1]{Christofides}.} that the Hamiltonian cycle $C$ returned by his algorithm satisfies $\omega(C)\leqslant \omega(T)+\omega(M)$. Next, Christofides established the inequality 
\begin{equation}\label{estimateonomegaM}
    \omega(M)\leqslant \frac{1}{2}\omega(C^*)
\end{equation} 

\noindent
where $C^*$ is an optimal solution for the TSP. Finally, using the inequality $\omega(T)\leqslant \omega(C^*)$, he arrived at the conclusion that $C$ is a $\frac{3}{2}$-optimal approximation. 

Let us emphasize that Chistofides' reasoning (and the constant $\frac{3}{2}$) hinges upon the metricity of the graph. Our objective now is to extend this method to a more general setting of $\gamma-$polygon graphs. We carry out the first 3 steps exactly as we did in the metric case. At the end of step 3 we have constructed a minimal-weight perfect matching, but \eqref{estimateonomegaM} may not hold since the graph need no longer be metric! So what is the couterpart of bound \eqref{estimateonomegaM} in the $\gamma-$polygonal case? The answer is given by the following lemma:\footnote{In particular, the inequality \eqref{estimateonomegaM} is a special case of Lemma \ref{lemma7} when $\gamma=1$.}

\begin{lemma}\label{lemma7}
Let $C^*$ be the optimal solution for TSP in a complete weighted $\gamma-$polygon graph $G$. Let $F$ be any complete subgraph of $G$ having an even number of vertices. If $M$ is a minimum-weight perfect matching in $F$, then 
$$\omega(M)\leqslant \frac{\gamma}{2}\omega(C^*).$$
\end{lemma}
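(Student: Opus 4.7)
The plan is to adapt the classical Christofides argument, which in the metric case produces a Hamiltonian cycle on $V(F)$ whose weight is bounded by $\omega(C^*)$ and then decomposes that cycle into two perfect matchings. In our $\gamma$-polygon setting the shortcutting step no longer preserves weight, so the inequality $\omega(C^*_F)\leqslant \omega(C^*)$ will have to be replaced by $\omega(C^*_F)\leqslant \gamma\,\omega(C^*)$, and the $\frac{\gamma}{2}$ factor will emerge naturally when we halve this.

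First I would take the optimal Hamiltonian cycle $C^* = (v_1, v_2, \dots, v_n, v_1)$ in $G$ and look at the cyclic subsequence consisting of exactly those vertices that belong to $V(F)$; call the resulting cycle on $V(F)$ by $C^*_F$. Each edge of $C^*_F$ corresponds to a (possibly trivial) subpath of $C^*$ that starts at one vertex of $V(F)$, traverses zero or more vertices outside $V(F)$, and ends at the next vertex of $V(F)$. These subpaths partition the edge set of $C^*$.

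The key step, and the place where the $\gamma$-polygon structure is used, is to apply the polygon inequality \eqref{polygonalinequality} to each such subpath: the single edge of $C^*_F$ replacing the subpath $(u_1, u_2, \dots, u_\ell)$ has weight at most $\gamma\sum_{k=1}^{\ell-1} d(u_k, u_{k+1})$. Summing over all edges of $C^*_F$ and using that the subpaths partition $E(C^*)$ yields $\omega(C^*_F) \leqslant \gamma\,\omega(C^*)$. The care required here, namely verifying that no edge of $C^*$ is counted twice across the subpaths, is a bookkeeping issue rather than a deep one; this is the main (and essentially only) subtle point in the argument.

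Finally, since $|V(F)|$ is even, the cycle $C^*_F$ has an even number of edges, so its edges can be 2-coloured alternately, producing two perfect matchings $M_A, M_B$ of $F$ whose total weight is $\omega(C^*_F)$. Hence $\min\{\omega(M_A), \omega(M_B)\} \leqslant \tfrac{1}{2}\omega(C^*_F) \leqslant \tfrac{\gamma}{2}\omega(C^*)$. Since $M$ is a minimum-weight perfect matching of $F$, we have $\omega(M) \leqslant \min\{\omega(M_A), \omega(M_B)\}$, which gives the desired bound $\omega(M) \leqslant \tfrac{\gamma}{2}\omega(C^*)$.
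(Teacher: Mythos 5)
Your proposal is correct and follows essentially the same route as the paper's own proof: restrict $C^*$ to the vertices of $F$ in their order of visitation, bound each resulting edge by $\gamma$ times the weight of the corresponding subpath of $C^*$ via the polygon inequality, and then split the restricted cycle into two alternating perfect matchings, the cheaper of which costs at most $\frac{\gamma}{2}\omega(C^*)$. The disjointness of the subpaths that you flag as the one subtle point is exactly the observation the paper relies on (implicitly) to sum the inequalities without double-counting.
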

\begin{proof}
Let $C^*:=(y_1,\dots,y_n,y_1)$, where $y_1$ is some arbitrarily chosen vertex from $F$. Let $z_1:=y_1$ and $C_F :=(z_1,z_2,\ldots,z_m,z_1)$ be an unique enumeration of vertices of $F$ such that $z_i$ preceeds $z_j$ in $C^*$ whenever $i<j$.\footnote{Informally, we follow the route of the Traveling Salesperson and label subsequent vertices of $F$ in the order of visiting.}

Notice that 
\begin{equation}\label{mincostmatchingcost}
    \forall_{i=1,\ldots,m}\ \omega(\{z_i,z_{i+1}\}) \leqslant \gamma\sum_{j=k_i}^{k_{i+1}-1} \omega(\{y_j,y_{j+1} \}),
\end{equation}
where $y_{k_i}=x_i$ and $y_{k_{i+1}}=x_{i+1}$ (and $x_{m+1} := x_1$). This leads us to the conclusion that 
\begin{equation}\label{estimateonCF}
    \omega(C_F)\leqslant \gamma \omega(C^*).
\end{equation}

Next, we split the cycle $C_F$ into two matchings by ``alternating the edges'', i.e. 
\begin{equation*}
    \begin{split}
    M_1&:=\left(V(F),\left\{ \{z_1,z_2\}, \{z_3,z_4\}, \dots, \{z_{m-1},z_m\}  \right\}\right),\\
    M_2&:=\left( V(F),\left\{ \{z_2,z_3\}, \{z_4,z_5\}, \dots, \{z_m,z_1\}  \right\}\right).
    \end{split}
\end{equation*}

\noindent
Obviously we have $\omega(M_1)+\omega(M_2)=\omega(C_F),$ which implies that either $\omega(M_1)\leqslant \frac{\omega(C_F)}{2}$ or $\omega(M_2)\leqslant \frac{\omega(C_F)}{2}$. Either way, if $M$ is the minimum-weight perfect matching  then 
\[
\omega(M)\leqslant \min\left\{ \omega(M_1),\omega(M_2) \right\} \leqslant \frac{\omega(C_F)}{2} \stackrel{(\ref{estimateonCF})}{\leqslant} \frac{\gamma}{2} \omega(C^*),
\]
\noindent
which concludes the proof.
\end{proof}


Next, we proceed with a counterpart of step 4. and mildly modify the Hierholzer algorithm to guarantee that the first edge of the resulting Eulerian cycle is specifically taken from the matching. Let $\mathbb{G} := ((V,\mathbb{E}),\omega)$ be a weighted multigraph, where 
\begin{itemize}
    \item $V(\mathbb{G}) := V(G) = V(T)$;
    \item $\mathbb{E}(G) := \left\{ (0,e) \ : \ e\in E(T)   \right\} \cup \left\{ (1,e) \ : \ e\in E(M) \right\}.$
\end{itemize}

\noindent 
Our modified variant of Hierholzer algorithm can be described in the following points:

\begin{enumerate}
    \item Select arbitrary starting vertex $x_1\in V(\mathbb{G})$, which is incident to some edge from $E(M)$. Let $W:=(y_1)$, where $y_1:=x_1$. Mark $y_1$ as a ``\textit{recently visited vertex}''. 
    \item Extend the sequence of vertices $W$ in the following way: 
    \begin{enumerate}
        \item select any neighbour $y$ of the ``\textit{recently visited vertex}'' connected to it by an ``\textit{unused}'' edge in $E(M)$ (if possible) or in $E(\mathbb{G})$ (this is always possible because each vertex in $V(\mathbb{G})$ has even degree).\footnote{Due to this ``prioritization'' $\{y_1,y\}$ is guaranteed to be taken from $E(M)$.}
        \item add $y$ to the sequence $W$. Mark $y$ as the ``\textit{recently visited vertex}'' and denote the traversed edge as ``\textit{used}''. If there is an unused edge incident to $y$, go back to step a).\footnote{
        At each step of the algorithm the only vertices with odd number of unused edges are $x_1$ and the current vertex $y$. Therefore, the loop can terminate only if we return to the initial vertex $x_1$.}
        
    \end{enumerate}
    \item If there are any unused edges after this process, start at any vertex $x\in W$ which has at least one neighbour not in $W$. Repeat the procedure described in step 2 obtaining a closed walk $W_x.$ Replace the last appearance of $x$ in sequence $W$ with $W_x$.
\end{enumerate}

Having obtained an Eulerian walk $W:=(x_1,\dots,x_{k+1}),\ k=|E(M)|+|E(T)|$ by the means of previously discussed algorithm, we can move to the last, fifth step of our procedure, i.e., shortcutting. This is a rather subtle point. If we were to shortcut in a naive way (as we did in the metric case) then we could cross out the edges from the matching. This, in turn, would cause the error estimate to include the relaxation constant $\gamma$ twice -- once in the shortcutting procedure and the second time in the estimation of the total cost of the matching. Since $\gamma$ usually exceeds $1$, we definitely want to avoid having $\gamma^2$ in the error estimate.

In order to avoid that obstacle we introduce the following enhanced shortcutting procedure, which results in the Hamiltonian cycle $\tilde{C}_T:=(y_1,\dots,y_n,y_{n+1})$:

\begin{enumerate}
    \item Assign $y_1:=x_1$ and $y_2:=x_2$. From the construction of $W$ (the Eulerian walk from the previous step) it follows that $\{x_1,x_{2}\}\in E(M)$. Let $i:=3$ and $j:=3$.
    \item While $i\leqslant n$ perform the following steps:
        \begin{enumerate}
            \item If $x_j\notin V(M)$ and it already has appeared in $\tilde{C}_T$, increment $j$. 
            
            \item If $x_j\notin V(M)$ has not appeared in $\tilde{C}_T$ so far, put $y_i:=x_j$. In such case, increment both $i$ and $j$.
            
            \item If $x_j\in V(M)$, $\{x_j,x_{j+1}\}\notin E(M)$ and $\{x_{j-1},x_j\}\notin E(M)$, increment $j$. 
            
            \item Otherwise, i.e., in the situation where $x_j\in V(M)$ and either $\{x_j,x_{j+1}\}\in E(M)$ or $\{x_{j-1},x_j\}\in E(M)$, let $y_i:=x_j$. Increment both $i$ and $j$.
        \end{enumerate}
\end{enumerate}

Notice that these four situations described above not only are mutually exclusive, but they also guarantee that each vertex will  appear exactly once in $\tilde{C}_T$. In the case where $x_j\notin V(M)$, the assignment takes place upon the first encounter of the discussed vertex. In the case where $x_j\in V(M)$, the spot in which $x_j$ is assigned is unique as well, due to the fact that every vertex in $V(M)$ has a degree in subgraph $M$ equal to $1$. This way the resulting Hamiltonian cycle $\tilde{C}_T$ contains all edges from $E(M)$ i.e., no edge from $E(M)$ was erased during the shortcutting procedure. 

\begin{theorem}
We have 
$$\sum_{e \in E(\tilde{C}_T)\backslash E(M)}\ \omega(e) \leqslant \gamma \omega(T).$$
\label{greatheorem}
\end{theorem}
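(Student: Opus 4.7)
The plan is to identify each non-matching edge of the Hamiltonian cycle $\tilde C_T$ with a sub-walk of the Eulerian walk $W$ whose edges all lie in the tree $T$, and then bound its weight via the $\gamma$-polygon inequality applied along that sub-walk. Since each tree edge appears in $W$ exactly once, a term-by-term summation of these bounds will yield precisely $\gamma\,\omega(T)$ on the right-hand side.

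Concretely, writing $y_i = x_{j_i}$ for an increasing sequence of indices $j_1 < \ldots < j_{n+1}$, each edge $\{y_i,y_{i+1}\}$ of $\tilde C_T$ corresponds to the sub-walk $(x_{j_i},\ldots,x_{j_{i+1}})$ in $W$. The crux of the proof is the following claim: if $\{y_i,y_{i+1}\}\notin E(M)$, then every edge of this sub-walk lies in $E(T)$. I would argue by contradiction: any putative matching edge $(x_j,x_{j+1})$ inside the sub-walk has both endpoints in $V(M)$, and rule (d) of the enhanced shortcutting forces both of them to be promoted to $y$-vertices at consecutive positions. This either makes $\{y_i,y_{i+1}\}$ itself equal to $\{x_j,x_{j+1}\}\in E(M)$ (contradicting the hypothesis) or it produces an extra $y$-vertex strictly between $y_i$ and $y_{i+1}$ (impossible by the definition of $j_i, j_{i+1}$ as successive hit times).

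Granted this claim, the $\gamma$-polygon inequality \eqref{polygonalinequality} yields
\begin{equation*}
\omega(\{y_i,y_{i+1}\}) \leqslant \gamma\sum_{j=j_i}^{j_{i+1}-1}\omega(\{x_j,x_{j+1}\}),
\end{equation*}
with every summand on the right being the weight of a tree edge. Summing over the indices $i$ for which $\{y_i,y_{i+1}\}\notin E(M)$ and observing that each tree edge appears in $W$ exactly once, hence contributes to at most one such sub-walk, produces the desired estimate $\gamma\,\omega(T)$.

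The main obstacle is the claim itself, and it is precisely where rule (d) of the enhanced shortcutting earns its keep: matching vertices must be registered into $\tilde C_T$ exactly when they serve as endpoints of matching edges, so that no matching edge of $W$ can be silently cut away. Without this safeguard, a hidden matching edge would be replaced by a shortcut whose length is controlled only through $\gamma$, ultimately reintroducing the spurious factor of $\gamma^2$ that the discussion preceding the theorem explicitly warns against.
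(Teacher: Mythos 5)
Your proposal is correct and follows essentially the same route as the paper's own proof: decompose each non-matching edge of $\tilde{C}_T$ into its sub-walk in the Eulerian walk $W$, argue that no matching edge can lie inside such a sub-walk (you justify this via rule (d) in slightly more detail than the paper, which simply asserts that shortcutting a matching edge is impossible by construction), apply the $\gamma$-polygon inequality, and sum using the fact that the sub-walks are edge-disjoint because $W$ traverses each edge of the multigraph exactly once.
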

\begin{proof}
If $e \in E(\tilde{C}_T)\backslash E(M)$ then $e = \{y_k,y_{k+1}\}$ for some $y_k,y_{k+1}\in V(G).$ Furthermore, there exist
\begin{equation}\label{chain_of_vertices}
x_{k_i} := y_k,\ x_{k_i + 1},\ \ldots,\ x_{k_j} := y_{k+1}
\end{equation}

\noindent
such that 
$$\{x_{k_i}, x_{k_i+1}\}, \ldots, \{x_{k_j-1}, x_{k_j}\} \in (E(M)\cup E(T)),$$

\noindent
because all the $x_k$'s are the vertices from the Eulerian walk $W$. Next, we observe that 
$$\forall_{l = 0,\ldots,k_j-k_i-1}\ \{x_{k_i+l}, x_{k_i+l+1}\} \not\in E(M),$$

\noindent
since that would imply that an edge from the perfect matching $M$ was shortcutted, which is impossible.\footnote{If $k_j=k_i+1$ then the discussed chain of vertices has length 2 and no edge was subjected to the shortcutting procedure. This means that $e=\{y_k,y_{k+1}\}$ belongs to $\tilde{C}_T$ and from the assumption $e\notin E(M)$.}
Thus (by the $\gamma-$polygon inequality) we have establish that
\begin{equation}\label{nierownosc_dla_jednej_krawedzi}
\forall_{e \in E(\tilde{C}_T)\backslash E(M)}\ \omega(e) \leqslant \gamma \sum_{l=0}^{k_j-k_i-1}\ \omega(\{x_{k_i+l}, x_{k_i+l+1}\}).
\end{equation}

Observe that for distinct $i,j$ we have that chains of edges defined by \eqref{chain_of_vertices} are disjoint (this is due to the fact that Eulerian walk $W$ visits every edge in multigraph $\mathbb{G}$ exactly once). As a consequence, on the right-hand side of the inequalities \eqref{nierownosc_dla_jednej_krawedzi} no edge appears more than once for all $e\in E(\tilde{C}_T)\setminus E(M)$. Finally, we have
\[
\sum_{\{y_k,y_{k+1}\} \in E(\tilde{C}_T)\backslash E(M)}\ \omega(\{y_k,y_{k+1}\}) \leqslant \gamma \sum_{\{k_i\ : \ y_k=x_{k_i}\}} \sum_{l=0}^{k_j-k_i-1}\ \omega(\{x_{k_i+l}, x_{k_i+l+1}\}) \leqslant \gamma E(T),
\]

\noindent
which ends the proof.
\end{proof}

Since $E(M)\subset E(\tilde{C}_T)$ then 
\[
\omega(\tilde{C}_T) =\sum_{e \in E(\tilde{C}_T)\backslash E(M)}\ \omega(e)  + \sum_{e\in E(M)}\ \omega(e) \stackrel{Theorem\ \ref{greatheorem},\ Lemma\ \ref{lemma7}}{\leqslant} \gamma \omega(T) + \frac{\gamma}{2}\omega(C^*) \leqslant \frac{3\gamma}{2}\omega(C^*).
\]



Thus we have reached the climax of our paper:

\begin{theorem}
Let $(K_n,\omega)$ be a complete  $\gamma$-polygon graph. There exists a $\frac{3\gamma}{2}$-approximation to the Travelling Salesperson Problem which can be computed in $\mathcal{O}(n^3)$ time.
\end{theorem}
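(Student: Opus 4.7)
The plan is to assemble the theorem directly from the modified Christofides procedure developed above: the approximation bound has essentially already been proved, so only the complexity audit needs to be added. For the quality estimate, I would take the output $\tilde{C}_T$ of the algorithm and split its weight as
\[
\omega(\tilde{C}_T) = \sum_{e\in E(M)}\omega(e) + \sum_{e\in E(\tilde{C}_T)\setminus E(M)}\omega(e),
\]
which is permissible because the enhanced shortcutting is designed precisely so that $E(M)\subset E(\tilde{C}_T)$. Lemma~\ref{lemma7} bounds the first sum by $\frac{\gamma}{2}\omega(C^*)$, Theorem~\ref{greatheorem} bounds the second by $\gamma\omega(T)$, and \eqref{eq:comparing optimal and MST} gives $\omega(T)\leqslant\omega(C^*)$; adding these yields $\omega(\tilde{C}_T)\leqslant \frac{3\gamma}{2}\omega(C^*)$, which is the desired approximation ratio.

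For the running time I would walk through the five steps in order. Step~1 produces a minimum spanning tree on $K_n$ in $\mathcal{O}(n^2)$ via Prim's algorithm with an array-based priority queue. Step~2, extracting the set of odd-degree vertices and the induced complete subgraph $F$, is $\mathcal{O}(n^2)$ by a scan through the adjacency matrix of $G$. Step~3 is the bottleneck: Gabow's algorithm for minimum-weight perfect matching on $F$ runs in $\mathcal{O}(|V(F)|^3)=\mathcal{O}(n^3)$, and since it manipulates weights only by comparison and addition it applies verbatim to positive real weights. Step~4, the modified Hierholzer traversal of $\mathbb{G}$, visits each multiedge a bounded number of times and therefore runs in $\mathcal{O}(|E(T)|+|E(M)|)=\mathcal{O}(n)$. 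Step~5, the enhanced shortcutting, is a single pass through the Eulerian walk using $\mathcal{O}(1)$ membership tests against precomputed indicator arrays for $V(M)$ and for the edges of $M$, so it is $\mathcal{O}(n)$. Summing these costs, the total is dominated by step~3 at $\mathcal{O}(n^3)$.

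The main obstacle I foresee is purely bookkeeping. One has to be confident that the modified Hierholzer algorithm genuinely produces an Eulerian walk whose first edge belongs to $E(M)$, so that the enhanced shortcutting of step~5 is well-initialized and the hypotheses of Theorem~\ref{greatheorem} are met; and one has to verify that the case analysis in the enhanced shortcutting can be executed with constant-time primitives. Both points were argued informally in the preceding subsection and are not conceptually difficult, only notationally heavy. Once they are secured, the theorem follows by concatenating the approximation bound with the complexity audit.
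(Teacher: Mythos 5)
Your proposal is correct and follows essentially the same route as the paper: the weight of $\tilde{C}_T$ is split into matching and non-matching edges, bounded via Lemma~\ref{lemma7}, Theorem~\ref{greatheorem} and the inequality $\omega(T)\leqslant\omega(C^*)$, with Gabow's $\mathcal{O}(n^3)$ matching step dominating the running time. Your step-by-step complexity audit is in fact slightly more explicit than the paper's, which simply notes that the matching computation sets the overall complexity.
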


\end{document}